\documentclass[11pt,reqno]{amsart}

\usepackage[margin=1.1in]{geometry}
\usepackage{amsmath,amssymb,amsthm}
\usepackage{array}
\usepackage{microtype}
\usepackage{mathtools}
\usepackage[dvipsnames]{xcolor}
\usepackage[colorlinks=true,linkcolor=NavyBlue,citecolor=NavyBlue,urlcolor=NavyBlue]{hyperref}

\theoremstyle{plain}
\newtheorem{theorem}{Theorem}[section]
\newtheorem{lemma}[theorem]{Lemma}

\newtheorem{conjecture}[theorem]{Conjecture}
\newtheorem{corollary}[theorem]{Corollary}

\theoremstyle{definition}

\newtheorem*{note*}{Note}

\theoremstyle{remark}

\newcommand{\F}{\mathbb{F}}
\newcommand{\R}{\mathbb{R}}

\newcommand{\norm}[1]{\left\lVert #1 \right\rVert}

\numberwithin{equation}{section}

\title{Kusner's conjecture is false for $p>4$}

\author{Nathan Xiong}
\address{Department of Mathematics, Massachusetts Institute of Technology, Cambridge, MA 02139, USA}
\email{nxiong@mit.edu}

\begin{document}

\begin{abstract}
An equilateral set is a set of points in a metric space whose pairwise distances are all equal. Kusner conjectured that the maximum cardinality of an equilateral set in $\R^n$ with the $\ell_p$ metric is $n+1$ for every $1<p<\infty$. We disprove the conjecture for all $p>4$. In particular, for every such $p$, we define $m=m(p)$ and construct an equilateral set of $8m$ points in $\R^{8m-2}$. Previously, Swanepoel disproved the conjecture for $1<p<2$, while Ge, Xu, and Zhou showed that it holds for $2\le p\le 4$. Combining these prior results with the paper's main theorem resolves Kusner's conjecture for all $1<p<\infty$. 
\end{abstract}

\maketitle

\section{Introduction}\label{sec:intro}

A subset $S$ of a metric space $X$ is called \emph{equilateral} if all pairwise distances between distinct points of $S$ are equal, and the \emph{equilateral dimension} $e(X)$ is the maximum cardinality of an equilateral subset of $X$. The study of equilateral sets is a classical problem in discrete geometry.

\subsection{Kusner's conjecture}
For $1\le p<\infty$, let $\ell_p^n$ denote $\R^n$ equipped with the norm
\[\norm{x}_p=\left(\sum_{i=1}^{n}|x_i|^p\right)^{1/p}.\]

Kusner \cite{Guy} posed the following conjecture on the equilateral dimension of $\ell_p^n$.
\begin{conjecture}[Kusner's conjecture]\label{conj:kusner}
    $e(\ell_1^n)=2n$ and $e(\ell_p^n)=n+1$ for all $1<p<\infty$. 
\end{conjecture}
For $\ell_1^n$, note that the set $\{\pm e_i\colon i=1,\ldots,n\}$, where $e_i$ is the $i$-th standard basis vector, forms an equilateral set, so $e(\ell_1^n)\ge 2n$. The strongest known upper bound is $e(\ell_1^n)\le cn\log n$ for some absolute constant $c$, proven by Alon and Pudl\'ak \cite{AP03}. For $\ell_p^n$ when $p>1$, the standard basis vectors $e_i$, together with $\alpha\sum_{i=1}^{n}e_i$ for a suitably chosen $\alpha$, form an equilateral set, so $e(\ell_p^n)\ge n+1$. Smyth \cite{smyth1} was the first to obtain a nontrivial upper bound on $e(\ell_p^n)$ for arbitrary $n$ and $p$, and it was later improved by Alon and Pudl\'ak \cite{AP03} to $e(\ell_p^n)\le c_p n^{(2p+2)/(2p-1)}$ for some constant $c_p$ depending on $p$.

Kusner's conjecture has already been resolved for a subset of $p$ values.
\begin{itemize}
    \item For $p=2$, it is easy to show that Conjecture~\ref{conj:kusner} holds.
    \item For $p=4$, Swanepoel \cite{Swanepoel2} proved that Conjecture~\ref{conj:kusner} holds.
    \item For $p\in(1,2)$, Swanepoel \cite{Swanepoel2} showed that there always exists an equilateral set with size greater than $n+1$ for a sufficiently large $n$, thus disproving Conjecture~\ref{conj:kusner}.
    \item For $p\in(2,4)$, Ge, Xu, and Zhou \cite{ge2026} proved that Conjecture~\ref{conj:kusner} holds.
\end{itemize}
Thus, Kusner's conjecture has been resolved for all $1<p\le 4$. Meanwhile, various improved upper bounds on $e(\ell_p^n)$ have been proven (see \cite{Konyagin2011}, \cite{CHEN2022103459}, and \cite{ge2026}). Furthermore, Chalmers \cite{chalmers2026} recently constructed a counterexample to Conjecture~\ref{conj:kusner} for $p=5$ (that extends to all $p$ in a small interval around 5), but this paper presents a different construction that works for the entire $p>4$ regime.

\subsection{Main result}
The paper's main result disproves Conjecture~\ref{conj:kusner} for all $p>4$. 
\begin{theorem}\label{thm:main}
For any $p>4$, there exists a positive integer $m$ and an equilateral set of $8m$ points in  $\ell_p^{8m-2}$.
\end{theorem}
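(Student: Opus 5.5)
We construct an explicit configuration of $8m$ points in $8m-2$ dimensions, with $m=m(p)$ large. The points are built from a few block types with free real parameters, and we then impose equilaterality via a counting argument on constraints versus parameters. The guiding heuristic is that for $p>4$ the function $t\mapsto |t|^p$ is flat near $0$ to fourth order. Perturbations of size $\varepsilon$ in many coordinates therefore contribute only $O(m\varepsilon^p)$ to $p$-th powers of distances. This lets us trade "dimension" for "magnitude", which is exactly what is impossible for $p\le 4$, consistent with the Ge--Xu--Zhou range.

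\textbf{Step 1: choose a symmetric ansatz.} Start from an equilateral simplex-like configuration whose coordinates are $0$ or $\pm1$ (for instance points of the form $\pm e_i$ arranged by a structure of size $8$, giving $8m$ points indexed by $\{1,\dots,m\}\times\{1,\dots,8\}$). Then add small correction coordinates. Impose a large symmetry group: cyclic shifts in the $m$ index and sign/permutation symmetries within each block of $8$. With this group, the $\binom{8m}{2}$ distance equations collapse to a bounded number of orbit types. Roughly, they split into pairs in the same block and pairs in different blocks, with a constant number of subtypes.

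\textbf{Step 2: reduce to a finite system of equations in few parameters.} Each coordinate block carries a bounded number of real parameters: $a,b,c,\dots$ and small $\varepsilon$-type entries that are spread over the $m$ blocks. Each orbit type of squared distance then becomes an explicit expression such as $\sum$ of terms $|a-b|^p$, $(m-1)|\varepsilon|^p$, and so on. The requirement is that all orbit-type distances are equal. Because the $\varepsilon$-entries are replicated across $\sim m$ coordinates, their total contribution is $m|\varepsilon|^p$. Choosing $\varepsilon\sim m^{-1/p}$, or tuning the scale appropriately, gives an $O(1)$ adjustable quantity, while cross-terms behave like $m\varepsilon^{2}$ or $m\varepsilon^{4}$ after Taylor expansion. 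Since $p>4$, the latter dominate in a controllable way. The dimension count $8m-2$ should come out naturally, from $8m$ blocks of coordinates minus two coordinates killed by a linear dependency (e.g.\ a sum-zero constraint).

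\textbf{Step 3: solve the system.} Here we use the intermediate value theorem or a degree/implicit function argument. At a degenerate base point the equations are satisfied to leading order, and their Jacobian with respect to the remaining parameters is nonsingular. A sign change in the one-dimensional residual as $\varepsilon$ (or $m$) varies then yields an exact solution. The choice $m=m(p)$ is made so that the relevant competing terms have the right sign, namely $p>4$ makes $m\varepsilon^p$ beat the fourth-order error.

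\textbf{Main obstacle.} The hardest step is Step 3: verifying that the reduced system has a genuine real solution for every $p>4$, not just near one value. I would first reduce to a single scalar equation $F_p(\text{parameter})=0$ by solving the others explicitly. I would then exhibit two parameter values where $F_p$ has opposite signs, using the inequality $|1+t|^p+|1-t|^p>2+p(p-1)t^2+\dots$ together with the growth of $m$ to dominate lower-order terms.
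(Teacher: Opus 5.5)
There is a genuine gap. Your proposal is a template rather than a proof. No point set is ever specified, so none of Steps 1--3 can be checked. The actual content of the theorem is deferred: a configuration whose $\binom{8m}{2}$ distance equations collapse to a system with a real solution precisely because $p>4$.

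The few concrete commitments you do make do not hold up.
\begin{itemize}
\item The suggested base points $\pm e_i$ are not equilateral in $\ell_p$ for $p>1$: $\|e_i-(-e_i)\|_p=2$ but $\|e_i-e_j\|_p=2^{1/p}$. So they provide no base point at which the equations hold to leading order.
\item The scaling in Step 2 is inconsistent. With $\varepsilon\sim m^{-1/p}$, the terms $m\varepsilon^2\sim m^{1-2/p}$ and $m\varepsilon^4\sim m^{1-4/p}$ are unbounded for $p>4$. They are therefore not controllable corrections to the $O(1)$ quantity $m\varepsilon^p$.
\item Since $\varepsilon^p\ll\varepsilon^4$ for small $\varepsilon$ when $p>4$, the Step 3 claim that $m\varepsilon^p$ ``beats the fourth-order error'' is backwards.
\item You never compare the number of orbit types with the number of parameters. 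You never identify the base point or the Jacobian you claim is nonsingular, and you never exhibit an inequality that fails for $p\le 4$.
\item The dimension $8m-2$ is read off from the statement rather than derived.
\end{itemize}

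What is missing is an exact, non-perturbative construction, together with the one inequality where $p>4$ is used. The paper takes the $8m$ points $(\sigma U_{i,s},V_{i,s})$ with $\sigma=\pm1$, $(i,s)\in\F_2^2\times\F_2^k$ and $m=2^k$.
\begin{itemize}
\item The front part is $U_{i,s}=(H_k)_{s,:}\otimes q_i$, with $q_{00}=(a,1,1,0)$, $q_{10}=(-1,a,0,1)$, $q_{01}=(-1,0,a,-1)$, $q_{11}=(0,-1,1,a)$. These are sign-blind: $\|q_i-q_{i'}\|_p^p=\|q_i+q_{i'}\|_p^p$ equals $A$ if $i+i'\in\{10,01\}$ and $B$ if $i+i'=11$. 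Hence $\sigma$ and the Hadamard signs drop out.
\item The back part $V_{i,s}$ is row $(i,s)$ of $H_{k+2}$ with columns $(00,0)$ and $(11,0)$ deleted; this deletion is the source of the $-2$. Columns $(10,0)$ and $(01,0)$ are scaled by $\beta$ and the rest by $\alpha$.
\item Hadamard orthogonality reduces all distances to four cases. Setting $R=a^p+2$, $\alpha^p=R/4$ and $\beta^p=m(A-B)/2^p$ leaves the single equation $2A-B=2^{p-1}R(1+1/m)$.
\item The hypothesis $p>4$ enters only through $(x+y)^p+|x-y|^p>2x^p+2y^p+(2^p-4)(xy)^{p/2}$ for distinct $x,y>0$, which is an identity when $p=4$. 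Applied at $x=4^{1/p}$, $y=1$, it gives $\Phi(4^{1/p})>1>\Phi(1)$ for $\Phi=(2A-B)/(2^{p-1}R)$. The intermediate value theorem then yields $a$ for every large $m$.
\end{itemize}
Nothing in this construction is small: $\alpha$ is of order one and $\beta$ grows like $m^{1/p}$. The role of $p>4$ is a global convexity inequality, not flatness of $|t|^p$ at $0$.
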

\begin{corollary}\label{cor:kusner}
Let $1<p<\infty$. Then $e(\ell_p^n)=n+1$ holds for all $n\ge 1$ if and only if $2\le p\le 4$.
\end{corollary}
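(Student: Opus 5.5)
The Corollary follows by combining Theorem~\ref{thm:main} with the prior results listed in the introduction, so the plan is a case analysis on $p\in(1,\infty)$. Throughout I use the general lower bound $e(\ell_p^n)\ge n+1$ for $p>1$. This comes from the construction recalled in the introduction: the standard basis vectors $e_1,\ldots,e_n$ together with $\alpha\sum_i e_i$ for a suitable $\alpha$. So the statement ``$e(\ell_p^n)=n+1$ for all $n$'' is equivalent to the upper bound $e(\ell_p^n)\le n+1$ for all $n$.

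For the ``if'' direction, assume $2\le p\le 4$ and split into three cases.
\begin{itemize}
\item For $p=2$, the upper bound is the classical fact that an equilateral set in Euclidean space is affinely independent, so it has at most $n+1$ points.
\item For $p=4$, the upper bound is Swanepoel's theorem \cite{Swanepoel2}.
\item For $p\in(2,4)$, the upper bound is the theorem of Ge, Xu, and Zhou \cite{ge2026}.
\end{itemize}
Together with the lower bound, these give $e(\ell_p^n)=n+1$ for every $n\ge1$.

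For the ``only if'' direction, I argue by contraposition: assume $p\notin[2,4]$ and exhibit some $n$ with $e(\ell_p^n)>n+1$.
\begin{itemize}
\item If $1<p<2$, Swanepoel \cite{Swanepoel2} provides, for all sufficiently large $n$, an equilateral set in $\ell_p^n$ with more than $n+1$ points.
\item If $p>4$, Theorem~\ref{thm:main} provides $m\ge1$ and an equilateral set of $8m$ points in $\ell_p^{n}$ with $n=8m-2$. Then $e(\ell_p^n)\ge 8m=n+2>n+1$.
\end{itemize}
In both cases the equality $e(\ell_p^n)=n+1$ fails for some $n$, which completes the proof.

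There is no genuine obstacle here, since the Corollary is bookkeeping once Theorem~\ref{thm:main} is available. The only points to check are that the cited results cover every $p\in(1,\infty)$ with no gaps at the endpoints $p=2$ and $p=4$, and that the dimension count $8m>(8m-2)+1$ really yields a counterexample. All the substantive difficulty lies in the proof of Theorem~\ref{thm:main} itself.
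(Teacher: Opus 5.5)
Your proposal is correct and follows essentially the same route as the paper, which gives no separate proof and simply combines the prior results listed in the introduction ($p=2$ classical, $p=4$ and $1<p<2$ by Swanepoel, $2<p<4$ by Ge--Xu--Zhou) with the general lower bound $e(\ell_p^n)\ge n+1$ and Theorem~\ref{thm:main}. Your endpoint check and the dimension count $8m=(8m-2)+2>(8m-2)+1$ are exactly what is needed.
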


\section{Equilateral set construction}\label{sec:construction}
Fix $p>4$ and let $m=2^k$ for some positive integer $k$, to be defined later. In this section, we construct an equilateral set $S$ of $8m$ points in $\ell_p^{8m-2}$. To specify this set, we decompose every point in $S$ into its ``front'' $4m$ coordinates and its ``back'' $4m-2$ coordinates.

\subsection{Front coordinates}
Let $a>1$ be some parameter to be chosen later and define the following four 4-dimensional vectors, indexed by $\F_2^2$:
\begin{align*}
    q_{00}&=(a,1,1,0) \\
    q_{10}&=(-1,a,0,1) \\
    q_{01}&=(-1,0,a,-1) \\
    q_{11}&=(0,-1,1,a)
\end{align*}
First, note that all four $q_i$ vectors have the same $\ell_p$-norm. Hence, we can define $R\coloneqq \norm{q_i}_p^p=a^p+2$. Moreover, if we define
\begin{align*}
    A &\coloneqq (a+1)^p+(a-1)^p+2 \\
    B &\coloneqq 2a^p+2^p
\end{align*}
then it is not hard to check that for all distinct $i,i'\in\F_2^2$, we have
\begin{equation}\label{eq:sign-blind}
\norm{q_i-q_{i'}}_p^p=\norm{q_i+q_{i'}}_p^p=
\begin{cases}
A & \text{if } i+i'=10\text{ or }01,\\
B & \text{if } i+i'=11 .
\end{cases}
\end{equation}

\begin{lemma}\label{lem:AB}
For $p>2$ and $a>1$, we have $A>B$.
\end{lemma}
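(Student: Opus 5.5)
We need to show that $(a+1)^p+(a-1)^p+2>2a^p+2^p$ for $p>2$ and $a>1$. The plan is to study the difference
\[
f(a)\coloneqq (a+1)^p+(a-1)^p-2a^p
\]
as a function of $a\ge 1$, and show that $f(a)>2^p-2$ for $a>1$. This is exactly $A-B>0$.

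\emph{Boundary value.} At $a=1$ we have $f(1)=2^p+0-2=2^p-2$, so $A=B$ at $a=1$. It therefore suffices to show that $f$ is strictly increasing on $[1,\infty)$.

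\emph{Monotonicity.} Differentiating gives
\[
f'(a)=p\bigl[(a+1)^{p-1}+(a-1)^{p-1}-2a^{p-1}\bigr].
\]
Since $p>2$, the map $g(t)=t^{p-1}$ is strictly convex on $[0,\infty)$. Both $a-1\ge 0$ and $a+1$ lie in this range. By strict convexity (Jensen with the midpoint $a$ of $a-1$ and $a+1$) we get $g(a+1)+g(a-1)>2g(a)$, so $f'(a)>0$ for every $a\ge1$. Hence $f(a)>f(1)=2^p-2$ for $a>1$, which rearranges to $A>B$.

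\emph{Main obstacle.} There is essentially no obstacle. The one point needing care is that $a-1\ge 0$, so that convexity of $t^{p-1}$ applies on the nonnegative reals without sign issues in the power. Strictness uses $p-1>1$, which is where the hypothesis $p>2$ enters: at $p=2$ one has $f\equiv 2=2^p-2$, so $A=B$ identically. As an alternative, one could expand via the second difference,
\[
f(a)=p(p-1)\int_0^1\!\int_{-s}^{s}\dots,
\]
but the derivative argument above is cleaner.
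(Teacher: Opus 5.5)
Your proof is correct and follows the paper's argument: both show that $A-B=(a+1)^p+(a-1)^p-2a^p-(2^p-2)$ vanishes at $a=1$ and has derivative $p\bigl((a+1)^{p-1}+(a-1)^{p-1}-2a^{p-1}\bigr)>0$, by strict convexity of $t\mapsto t^{p-1}$ for $p>2$. The unfinished integral aside ending in ``$\dots$'' is not needed and should be dropped.
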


\begin{proof}
Write $A-B=(a+1)^p+(a-1)^p-2a^p-(2^p-2)$. This vanishes at $a=1$, and its derivative with respect to $a$ is $p\cdot\left((a+1)^{p-1}+(a-1)^{p-1}-2a^{p-1}\right)$, which is positive for $a>1$ and $p>2$ due to the strict convexity of $x\mapsto x^{p-1}$ on $[0,\infty)$.
\end{proof}

Now, for every $(i,s)\in\F_2^2\times\F_2^k$, define $U_{i,s}\in\R^{4m}$ as the concatenation of $m$ $4$-dimensional vectors, indexed by $r\in\F_2^k$, where the vector with index $r$ is $(-1)^{r\cdot s}q_i$. Equivalently, let $H_k$ be the Hadamard matrix with rows and columns indexed by $\F_2^k$, and let $(H_k)_{s,:}$ refer to the $s$-th row of $H_k$. Then, $U_{i,s}=(H_k)_{s,:}\otimes q_i$. By definition of $R$, we have $\norm{U_{i,s}}_p^p=mR$ for all $(i,s)$.

\subsection{Back coordinates}
Consider the Hadamard matrix $H_{k+2}$ with rows and columns indexed by $\F_2^2\times\F_2^k$: its entry in row $(i,s)$ and column $(j,t)$ is $(-1)^{i\cdot j+s\cdot t}$. Within row $(i,s)$, the four columns with $t=0$ depend only on $i$. Letting $i=(i_0,i_1)$, the four column entries are
\begin{align*}
    (j,t)=(00,0) &\quad\longrightarrow\quad 1\\
    (j,t)=(10,0) &\quad\longrightarrow\quad (-1)^{i_0}\\
    (j,t)=(01,0) &\quad\longrightarrow\quad (-1)^{i_1}\\
    (j,t)=(11,0) &\quad\longrightarrow\quad (-1)^{i_0+i_1}
\end{align*}
Let $\alpha,\beta>0$ be constants, to be determined later. For every $(i,s)\in\F_2^2\times \F_2^k$, let $V_{i,s}\in\R^{4m-2}$ be the row $(H_{k+2})_{(i,s),:}$ with the two entries corresponding to columns $(00,0)$ and $(11,0)$ removed, then the two entries corresponding to columns $(10,0)$ and $(01,0)$ multiplied by $\beta$, and the remaining $4m-4$ entries multiplied by $\alpha$. Explicitly, $V_{i,s}$ has the $4m-4$ coordinates
\[\alpha\,(-1)^{i\cdot j+s\cdot t}\qquad \forall (j,t)\in\F_2^2\times\left(\F_2^k\setminus\{0\}\right),\]
concatenated with the two extra coordinates $\beta(-1)^{i_0}$ and $\beta(-1)^{i_1}$.

\subsection{Full construction}
Let $S\subset \R^{8m-2}$ be the set of $8m$ points defined as
\[S=\left\{X_{\sigma,i,s}\coloneqq (\sigma U_{i,s},V_{i,s})\colon \sigma\in\{\pm 1\}, (i,s)\in\F_2^2\times\F_2^k\right\}\]
We choose $a>1$ and $m=2^k$ so that they satisfy the equation
\begin{equation}\label{eq:constraint}
2A-B = 2^{p-1}R\left(1+\frac{1}{m}\right)
\end{equation}
and prove that such a solution always exists in Section~\ref{sec:existence}. Finally, we define $\alpha$ and $\beta$ via
\begin{equation}\label{eq:alphabeta}
\alpha^{p}=\frac{R}{4}\qquad\text{ and }\qquad\beta^{p}=\frac{m(A-B)}{2^p}
\end{equation}
where Lemma~\ref{lem:AB} ensures that $\beta$ is well-defined.

\section{Pairwise distance computation}\label{sec:distances}
Throughout this section, consider two distinct points $X_{\sigma,i,s},X_{\sigma',i',s'}\in S$. We decompose the $\ell_p$-distance into the front and back contributions.
\[\norm{X_{\sigma,i,s}-X_{\sigma',i',s'}}_p^p=\norm{\sigma U_{i,s}-\sigma'U_{i',s'}}_p^p+\norm{V_{i,s}-V_{i',s'}}_p^p.\]
We split the possibilities into four cases:
\begin{enumerate}
\item $i=i'$ and $s=s'$
\item $i=i'$ and $s\neq s'$
\item $i+i'=10\text{ or }01$
\item $i+i'=11$
\end{enumerate}

\subsection{Front contribution}
The front contribution is $\norm{\sigma U_{i,s}-\sigma' U_{i',s'}}_p^p$.

\subsubsection*{Case 1.}
Recall that $\norm{U_{i,s}}_p^p=mR$ for all $(i,s)$. Since $(i,s)=(i',s')$, we must have $\sigma=-\sigma'$, and thus the front contribution is $\norm{\pm 2U_{i,s}}_p^p=2^pmR$.

\subsubsection*{Case 2.}
Since $i=i'$, the $r$-th vector of $\sigma U_{i,s}-\sigma'U_{i',s'}$ is $\varepsilon q_i$ where $\varepsilon=\sigma (-1)^{r\cdot s}-\sigma'(-1)^{r\cdot s'}$. It is well known that for any two distinct rows in a Hadamard matrix $H_k$, their column entries agree in exactly $2^{k-1}$ positions and differ in the other $2^{k-1}$ positions. Since $s\neq s'$, it follows that $(-1)^{r\cdot s}$ and $(-1)^{r\cdot s'}$ agree for exactly $2^{k-1}$ values of $r$ and differ for the other $2^{k-1}$ values. Hence, if $\sigma=\sigma'$, then $\varepsilon=\pm 2$ for the $2^{k-1}$ values of $r$ where they differ and $\varepsilon=0$ for the $2^{k-1}$ values of $r$ where they agree. On the other hand, if $\sigma\neq\sigma'$, then $\varepsilon=\pm 2$ for the $2^{k-1}$ values of $r$ where they agree and $\varepsilon=0$ for the $2^{k-1}$ values of $r$ where they differ. Either way, the front contribution is
\[\norm{\sigma U_{i,s}-\sigma' U_{i',s'}}_p^p= 2^{k-1}\cdot 2^pR=2^{p-1}mR.\]

\subsubsection*{Case 3.}
Now, we have $i\neq i'$, and we write the $r$-th vector of $\sigma U_{i,s}-\sigma' U_{i',s'}$ as $\pm (q_i\pm q_{i'})$ for some choice of signs. By \eqref{eq:sign-blind}, the front contribution is $mA$. 

\subsubsection*{Case 4.}
Again, we have $i\neq i'$, and by \eqref{eq:sign-blind}, the front contribution is $mB$.

\subsection{Back contribution}
The back contribution is $\norm{V_{i,s}-V_{i',s'}}_p^p$.

\subsubsection*{Case 1.}
Since $(i,s)=(i',s')$, the back contribution is just 0.

\subsubsection*{Case 2.}
Now, we have $(i,s)\neq (i',s')$, so the points $V_{i,s}$ and $V_{i',s'}$ correspond to two distinct rows in the Hadamard matrix $H_{k+2}$. The two rows agree in exactly $2^{k+1}$ positions and differ in the other $2^{k+1}$ positions. In particular, the entries in column $(j,t)$ differ if and only if
\begin{equation}\label{eq:hadamard-cond}
(i+i')\cdot j+(s+s')\cdot t=1
\end{equation}
Since we assume $i=i'$, this reduces to $(s+s')\cdot t=1$. Note that we actually take $t\in\F_2^k\setminus\{0\}$ (rather than $t\in\F_2^k)$ for our definition of $V_{i,s}$, but $t=0$ cannot satisfy this equation anyway. Hence, there are exactly $2^{k+1}=2m$ positions where the two rows differ, and the entries differ by $\pm 2\alpha$. Thus, the back contribution is
\[\norm{V_{i,s}-V_{i',s'}}_p^p=2m(2\alpha)^p.\]

\subsubsection*{Case 3.}
As in the previous case, $(i,s)$ and $(i',s')$ correspond to two distinct rows in the Hadamard matrix $H_{k+2}$. This time though, we need to consider the additional solutions to \eqref{eq:hadamard-cond} when $t=0$. In particular, when $t=0$ and $i+i'=10\text{ or }01$, there are exactly two valid $j$ values. Hence, there are $2^{k+1}-2=2m-2$ positions among the $(j,t)\in\F_2^2\times \left(\F_2^k\setminus\{0\}\right)$ columns where the entries differ, and they differ by $\pm 2\alpha$. Moreover, since $i+i'=10\text{ or }01$, exactly one of the $\beta(-1)^{i_0}$ and $\beta(-1)^{i_1}$ entries differs (and the other agrees), and the difference is $\pm 2\beta$. Hence, the back contribution is
\[\norm{V_{i,s}-V_{i',s'}}_p^p=(2m-2)(2\alpha)^p+(2\beta)^p.\]

\subsubsection*{Case 4.}
Using the same argument as in the previous case, there are $2m-2$ positions among the $(j,t)\in\F_2^2\times\left(\F_2^k\setminus\{0\}\right)$ columns where the entries differ, and they differ by $\pm 2\alpha$. Then, since $i+i'=11$, both the $\beta(-1)^{i_0}$ and $\beta(-1)^{i_1}$ entries differ, and the difference is $\pm 2\beta$. Hence, the back contribution is
\[\norm{V_{i,s}-V_{i',s'}}_p^p=(2m-2)(2\alpha)^p+2(2\beta)^p.\]

\subsection{Total contribution}
To finish the computation, it suffices to prove that the total contribution (sum of the front and back contributions) is the same for all four cases.

\subsubsection*{Case 1.}
The total contribution is just $2^pmR$. 

\subsubsection*{Case 2.}
Plugging in the definition of $\alpha$ from \eqref{eq:alphabeta}, the total contribution is
\[2^{p-1}mR+2m(2\alpha)^p=2^{p-1}mR+2m\cdot 2^{p-2}R=2^pmR.\]

\subsubsection*{Case 3.}
Plugging in the definitions of $\alpha$ and $\beta$ from \eqref{eq:alphabeta}, as well as the constraint in \eqref{eq:constraint}, the total contribution is
\begin{align*}
    mA+(2m-2)(2\alpha)^p+(2\beta)^p &= mA+(2m-2)2^{p-2}R+m(A-B)\\
    &= m(2A-B)+2^{p-1}mR-2^{p-1}R\\
    &= 2^{p-1}mR+2^{p-1}R+2^{p-1}mR-2^{p-1}R\\
    &= 2^pmR
\end{align*}

\subsubsection*{Case 4.}
Plugging in the definitions of $\alpha$ and $\beta$ from \eqref{eq:alphabeta}, as well as the constraint in \eqref{eq:constraint}, the total contribution is
\[mB+(2m-2)(2\alpha)^p+2(2\beta)^p=mB+(2m-2)2^{p-2}R+2m(A-B)=2^pmR\]

\section{Proof of existence}\label{sec:existence}
To complete the proof of Theorem~\ref{thm:main}, it suffices to show that for every $p>4$, there exists an $a>1$ and $m=2^k$ satisfying \eqref{eq:constraint}.
\begin{lemma}\label{lem:power-ineq}
For $p>4$ and distinct $x,y>0$,
\[(x+y)^p+|x-y|^p>2x^p+2y^p+(2^p-4)(xy)^{p/2}.\]
\end{lemma}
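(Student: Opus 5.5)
The plan is to exploit homogeneity and the symmetry $x\leftrightarrow y$ to reduce to a one-variable inequality. Then I would differentiate once, so that the problem becomes an inequality whose two sides agree at both endpoints of an interval, and analyse its shape by convexity.

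\emph{Reduction.} Both sides are homogeneous of degree $p$ and symmetric in $x,y$. So I would scale to $xy=1$ and write $x=e^{u}$, $y=e^{-u}$ with $u>0$. Then $x+y=2\cosh u$ and $|x-y|=2\sinh u$, while $2x^p+2y^p=4\cosh(pu)$. The claim becomes $f(u)>0$ for $u>0$, where
\[f(u)=2^p\left(\cosh^p u+\sinh^p u\right)-4\cosh(pu)-(2^p-4).\]
Since $f(0)=0$, it suffices to show $f'(u)>0$ for $u>0$. Write $c=\cosh u$ and $s=\sinh u$, and use $\sinh(pu)=\tfrac12\left((c+s)^p-(c-s)^p\right)$, valid because $c\pm s=e^{\pm u}$. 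After dividing by $c^p$ and setting $r=s/c\in(0,1)$, the condition $f'>0$ becomes $\psi(r)>0$ on $(0,1)$, where
\[\psi(r)=2^{p-1}\left(r+r^{p-1}\right)-\left((1+r)^p-(1-r)^p\right).\]

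\emph{Shape of $\psi$.} We have $\psi(0)=0$, $\psi(1)=2^p-2^p=0$, and $\psi'(0)=2^{p-1}-2p>0$ for $p>4$. The second derivative is
\[\psi''(r)=(p-1)\left(2^{p-1}(p-2)r^{p-3}-p\left[(1+r)^{p-2}-(1-r)^{p-2}\right]\right).\]
Near $r=0$ it behaves like $-2p(p-1)(p-2)r<0$, because $p-3>1$. At $r=1$ it equals $2^{p-2}(p-1)(p-4)>0$; this is exactly where $p>4$ enters. If $\psi''$ changes sign exactly once on $(0,1)$, then $\psi$ is concave on $[0,r_0]$ and convex on $[r_0,1]$. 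Then $\psi>0$ on $(0,1)$ follows. On $(0,r_0]$, concavity gives $\psi\ge$ the chord from $0$ to $\psi(r_0)$, and $\psi'(0)>0$ keeps $\psi$ positive initially. On $[r_0,1)$, a convex function vanishing at $1$ can vanish at no earlier point $r_1$ unless $\psi\le 0$ on $[r_1,1]$. That would force a nonpositive minimum and, together with the concave piece, contradict $\psi'(0)>0$ and $\psi(0)=0$. I would write out this endpoint bookkeeping carefully, since the inequality is tight at both ends.

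\emph{Main obstacle.} The crux is proving that $\psi''$ has a single sign change. My first attempt is to show that
\[\rho(r)=\frac{(1+r)^{p-2}-(1-r)^{p-2}}{r^{p-3}}\]
is strictly decreasing on $(0,1)$. Then $\psi''$ has the sign of the constant $2^{p-1}(p-2)/p$ minus $\rho(r)$, which crosses zero once; note $\rho(0^+)=+\infty$ because $p-3>1$. To show monotonicity, I would expand the numerator as $2\sum_{j\text{ odd}}\binom{p-2}{j}r^j$, a convergent series for $r<1$. Dividing by $r^{p-3}$ gives terms $r^{j-p+3}$. The exponent $j-p+3$ is negative for small $j$ and positive for large $j$, and the signs of the generalized binomial coefficients also vary, so termwise monotonicity is not immediate. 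If that fails, I would instead substitute $r=\tanh v$, which turns $\rho$ into a hyperbolic expression $2\sinh((p-2)v)/(\sinh^{p-3}v\cosh v)$. I would then check that its logarithmic derivative is negative, which reduces to comparing $(p-2)\coth((p-2)v)$ with $(p-3)\coth v+\tanh v$. That is a more tractable comparison of hyperbolic functions.
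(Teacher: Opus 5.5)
Your reduction to $\psi>0$ on $(0,1)$ is correct, but the endpoint bookkeeping as you sketch it does not go through. The facts you invoke are $\psi(0)=\psi(1)=0$, $\psi'(0)>0$, and a concave-then-convex shape, even with $\psi''(1)>0$. These do not imply $\psi>0$, and there is no contradiction with $\psi'(0)>0$ to be had. The cubic $r(r-\tfrac12)(r-1)$ has all of these properties, with its single inflection at $r_0=\tfrac12$, yet it is negative on $(\tfrac12,1)$.

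The missing observation is that $r=1$ is a double zero of $\psi$: $\psi'(1)=2^{p-1}\bigl(1+(p-1)\bigr)-p\,2^{p-1}-p\cdot 0=0$. With it, $\psi''>0$ on $(r_0,1]$ forces $\psi'<0$ on $[r_0,1)$. So $\psi$ strictly decreases to $\psi(1)=0$ and is positive on $[r_0,1)$. Then $\psi(0)=0<\psi(r_0)$ and concavity on $[0,r_0]$ give $\psi>0$ on $(0,r_0]$, so $\psi'(0)>0$ is not even needed. Some such input is unavoidable: if $\psi'(1)$ were positive, $\psi$ would be negative just to the left of $1$.

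The crux you left open does yield to your hyperbolic route. Set $n=p-2$ and use the identities $\coth v-\tanh v=\frac{1}{\sinh v\cosh v}$ and $\coth v-\coth(nv)=\frac{\sinh((n-1)v)}{\sinh v\,\sinh(nv)}$. They turn $n\coth(nv)<(n-1)\coth v+\tanh v$ into $n\sinh((n-1)v)\cosh v>\sinh(nv)$. Expanding $\sinh(nv)=\sinh((n-1)v)\cosh v+\cosh((n-1)v)\sinh v$ reduces this to $(p-3)\tanh((p-3)v)>\tanh v$, which is immediate since $p-3>1$. So $\rho$ is strictly decreasing, $\psi''$ changes sign exactly once, and with the fix above your argument is complete.

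It is, however, a considerably longer route than the paper's. The paper normalizes $x=1$, $y=t\in(0,1)$ and shows that $Q(t)=\bigl((1+t)^p+(1-t)^p-2-2t^p\bigr)/t^{p/2}$ is decreasing. It gets the sign of $Q'$ from a single strict Jensen inequality for $u\mapsto u^{(p-2)/2}$, with weights $\frac{1-t^2}{2},\frac{1-t^2}{2},t^2$ at the points $(1+t)^2,(1-t)^2,t^2$, whose weighted mean is exactly $1$. This is literally your inequality. Under $r=\frac{1-t}{1+t}$ one has $\psi(r)=-\frac{2^p}{(1+t)^p}\bigl[1-t^p-\frac{1-t^2}{2}\bigl((1+t)^{p-2}+(1-t)^{p-2}\bigr)\bigr]$. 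So the Jensen step proves $\psi>0$ directly, with no second derivatives, inflection point, or endpoint analysis.
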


\begin{proof}
By homogeneity and symmetry, it suffices to show that, for all $0<t<1$,
\[Q(t)\coloneqq\frac{(1+t)^p+(1-t)^p-2-2t^p}{t^{p/2}}>Q(1).\]
The derivative of $Q$ is
\[Q'(t)=\frac{p}{t^{p/2+1}}\left[1-t^p-\frac{1-t^2}{2}\left((1+t)^{p-2}+(1-t)^{p-2}\right)\right].\]
Let $q=\frac{p-2}{2}>1$. Apply strict Jensen's inequality to $u\mapsto u^q$
with weights $\frac{1-t^2}{2}$, $\frac{1-t^2}{2}$, and $t^2$
and corresponding arguments $(1+t)^2$, $(1-t)^2$, and $t^2$. The weighted average of the arguments is
\[\frac{1-t^2}{2}\left((1+t)^2+(1-t)^2\right)+t^4=1.\]
Thus,
\[\frac{1-t^2}{2}\left((1+t)^{p-2}+(1-t)^{p-2}\right)+t^p>1.\]
It follows that $Q'(t)<0$ and $Q(t)>Q(1)$, as desired.
\end{proof}

\begin{lemma}
Let $p>4$. For any sufficiently large $m$, there exists $a>1$ such that, with $R\coloneqq a^p+2$, $A\coloneq (a+1)^p+(a-1)^p+2$, and $B\coloneq 2a^p+2^p$, we have
\[2A-B=2^{p-1}R\left(1+\frac{1}{m}\right).\]
\end{lemma}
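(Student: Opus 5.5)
The plan is an intermediate value argument in $a$. Define
\[F(a)\coloneqq \frac{2A-B}{2^{p-1}R}-1=\frac{2(a+1)^p+2(a-1)^p+4-2a^p-2^p}{2^{p-1}(a^p+2)}-1 .\]
We need $F(a)=1/m$ for some $a>1$. It suffices to find one $a_0>1$ with $F(a_0)>0$ and to show that $F(a)\to 0$ as $a\to\infty$. More precisely, we want $F(a)<1/m$ for some $a_1>a_0$ once $m$ is large. Continuity then gives a root in $(a_0,a_1)$, provided $1/m<F(a_0)$, which holds for all sufficiently large $m$.

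For the limit, expand for large $a$:
\[2(a+1)^p+2(a-1)^p-2a^p=2a^p+2p(p-1)a^{p-2}+O(a^{p-4}).\]
Hence $2A-B\sim 2a^p$, while $2^{p-1}R\sim 2^{p-1}a^p$. So $F(a)\to 2^{2-p}-1<0$ as $a\to\infty$, because $p>4>2$. This already supplies $a_1$ with $F(a_1)<0<1/m$, so we need not show that $F$ tends to $0$.

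The real content is finding $a_0$ with $F(a_0)>0$, that is, $2A-B>2^{p-1}R$. Here I would invoke Lemma~\ref{lem:power-ineq} with $x=a$ and $y=1$, which gives
\[(a+1)^p+(a-1)^p>2a^p+2+(2^p-4)a^{p/2}.\]
Then
\[2A-B>2a^p+8+2(2^p-4)a^{p/2}-2^p .\]
We want this to exceed $2^{p-1}a^p+2^p$. That requires
\[2(2^p-4)a^{p/2}>(2^{p-1}-2)a^p+2^{p+1}-8=(2^{p-1}-2)\left(a^p+4\right).\]
This is hopeless for large $a$, so we should take $a$ close to $1$. At $a=1$, both sides equal $(2^{p-1}-2)\cdot 5$ only if $2(2^p-4)=5(2^{p-1}-2)$, which is false in general. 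So I would check $F$ directly near $a=1$.

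At $a=1$: $A=2^p+2$ and $B=2^p+2$, so $2A-B=2^p+2$ and $2^{p-1}R=3\cdot 2^{p-1}$. Then $F(1)=(2^p+2)/(3\cdot2^{p-1})-1<0$ for $p>2$. So both ends are negative, and the lemma is exactly the tool for finding an interior positive point.

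The natural choice is to use the lemma at the point where $a^{p/2}$ balances. Writing $u=a^{p/2}$, the needed inequality becomes the quadratic
\[(2^{p-1}-2)u^2-2(2^p-4)u+(2^{p-1}-2)\cdot 4\le 0\]
(a non-strict inequality suffices since the lemma is strict). Dividing by $2^{p-1}-2$ and using $2(2^p-4)=4(2^{p-1}-2)$, this is $u^2-4u+4\le0$, i.e.\ $(u-2)^2\le 0$. So $u=2$, i.e.\ $a_0=2^{2/p}>1$, works. With the strict inequality of Lemma~\ref{lem:power-ineq} (applicable since $x=a_0\neq 1=y$), we get $F(a_0)>0$.

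Finally, $F(a_1)<0$ for large $a_1$, and $F$ is continuous on $[a_0,a_1]$. So for every $m>1/F(a_0)$ there is $a\in(a_0,a_1)$ with $F(a)=1/m$, which proves the lemma. The main obstacle is the choice $a_0=2^{2/p}$; the algebraic miracle that the quadratic becomes a perfect square is what makes this choice work.
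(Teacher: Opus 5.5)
Your proof is correct and essentially the paper's: the same choice $a_0=2^{2/p}=4^{1/p}$, the same application of Lemma~\ref{lem:power-ineq} at $x=a_0$, $y=1$ to get $2A-B>2^{p-1}R$, and then the intermediate value theorem. The only difference is that you bracket the root with the large-$a$ limit $F(a)\to 2^{2-p}-1<0$, whereas the paper uses the endpoint $a=1$ (you computed $F(1)<0$ yourself, so either choice works), and your perfect-square computation explains where $a_0$ comes from rather than just stating it.
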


\begin{proof}
Set $a_0=4^{1/p}>1$. Applying Lemma~\ref{lem:power-ineq} with
$x=a_0$ and $y=1$ gives
\[(a_0+1)^p+(a_0-1)^p>2^{p+1}+2.\]
Thus, at $a=a_0$, we have $R=6$ and
\begin{align*}
    2A-B &= 2(a_0+1)^p+2(a_0-1)^p+4-2a_0^p-2^p\\
    &> 3\cdot 2^p\\
    &= 2^{p-1}R
\end{align*}
Define the continuous function $\Phi$ over $a\in[1,a_0]$ by
\[\Phi(a)=\frac{2A-B}{2^{p-1}R}.\]
We already showed that $\Phi(a_0)>1$. Then, we can plug in $a=1$ to get
\[\Phi(1)=\frac{2^p+2}{3\cdot2^{p-1}}<1.\]
Therefore, as long as $m>\frac{1}{\Phi(a_0)-1}$, the intermediate value theorem supplies an $a\in(1,a_0)$ such that
$\Phi(a)=1+\frac{1}{m}$.
\end{proof}

\section{Acknowledgements}
The construction was found by OpenAI's GPT-6 Astra model. The author is responsible for any correctness issues. 

\bibliographystyle{amsplain}
\bibliography{bibfile}

\end{document}